\long\def\comment#1{}
\newtheorem{assumption}{Assumption}
\newtheorem{remark}{Remark}
\newtheorem{lemma}{Lemma}
\newtheorem{theorem}{Theorem}
\newtheorem{example}{Example}
\begin{document}

\setlength{\arraycolsep}{0.3em}

\title{Distributed Aggregative Optimization over Multi-Agent Networks
\thanks{}}

\author{Xiuxian Li, Lihua Xie, and Yiguang Hong
\thanks{X. Li and L. Xie are with School of Electrical and Electronic Engineering, Nanyang Technological University, 50 Nanyang Avenue, Singapore 639798 (e-mail: xxli@ieee.org; elhxie@ntu.edu.sg).}
\thanks{Y. Hong is with Key Laboratory of Systems and Control, Institute of Systems Science, Chinese Academy of Sciences, Beijing, 100190, China (email: yghong@iss.ac.cn).}
}

\maketitle

\setcounter{equation}{0}
\setcounter{figure}{0}
\setcounter{table}{0}

\begin{abstract}
This paper proposes a new framework for distributed optimization, called distributed aggregative optimization, which allows local objective functions to be dependent not only on their own decision variables, but also on the average of summable functions of decision variables of all other agents. To handle this problem, a distributed algorithm, called distributed gradient tracking (DGT), is proposed and analyzed, where the global objective function is strongly convex, and the communication graph is balanced and strongly connected. It is shown that the algorithm can converge to the optimal variable at a linear rate. A numerical example is provided to corroborate the theoretical result.
\end{abstract}

\begin{IEEEkeywords}
Distributed algorithm, aggregative optimization, multi-agent networks, strongly convex function, linear convergence rate.
\end{IEEEkeywords}

\section{Introduction}\label{s1}

Distributed optimization has received immense attention in the past decade, mostly inspired by advanced and inexpensive sensors, big data, and large-scale networks, and so on. In distributed optimization, a network consisting of a family of agents is usually introduced to capture the communication pattern among all agents, where each agent is only accessible to partial (and maybe private) information on the global optimization problem. In this case, the agents in the network aim to cooperatively, by local information exchange, solve the global optimization problem.

To date, a large volume of algorithms have been devised for distributed optimization problems. Generally speaking, the existing algorithms can be roughly summarized as two classes: consensus-based algorithms and dual-decomposition-based algorithms. Wherein, consensus-based algorithms employ the consensus idea to align the estimated variables of all agents, for which existing algorithms include distributed subgradient \cite{nedic2009distributed}, diffusion adaptation strategy \cite{chen2012diffusion}, fast distributed gradient \cite{jakovetic2014fast}, asynchronous distributed gradient \cite{xu2018convergence}, stochastic mirror descent \cite{yuan2018optimal}, and distributed quasi-monotone subgradient algorithm \cite{liang2019distributed}, etc. With regard to dual-decomposed-based algorithms, dual variables are usually introduced by viewing the synchronization of all local variables as equality constraints, including alternating direction method of multipliers (ADMM) \cite{shi2014linear}, EXTRA \cite{shi2015extra}, augmented Lagrangian method \cite{jakovetic2015linear}, distributed dual proximal gradient \cite{notarnicola2017asynchronous}, and distributed forward-backward Bregman splitting \cite{xu2018bregman}.

From another viewpoint, a variety of scenarios have so far been considered for distributed optimization. The simplest case is to minimize an objective/cost function without any constraints \cite{nedic2009distributed,shi2015extra,jerinkic2019distributed}, including feasible set constraints, equality and inequality constraints, where the objective function is separable and composed of local objective functions. A little more complex case is to address distributed optimization with global/local feasible set constraints \cite{nedic2010constrained,lin2016distributed2,liu2017convergence}, that is, the decision variable must stay within some pre-specified nonempty set that is often assumed to be closed and convex. Moreover, the scenario with local (affine) equality constraints are addressed, for example, in \cite{liu2017constrained}, while local inequality constraints are investigated such as in \cite{yang2016multi}, and global inequality constraints that can be realized by all agents are taken into account in the literature, see \cite{zhu2012distributed} for an example. Furthermore, the case with globally coupled inequality constraints, where individual agent is only capable of accessing partial information on the global inequality constraints, is studied such as in \cite{chang2014distributed,mateos2017distributed,falsone2017dual,notarnicola2017duality,notarnicola2019constraint,li2018distributedon}, and meanwhile, time-varying objective functions and/or constraint functions are also considered in recent years \cite{lee2017sublinear,li2019distributed4,yi2019distributed,yi2019distributed2}.

With careful observation, it can be found that distributed optimization studied in the aforementioned works focus on the case where a global objective function is a sum of local objective functions, which are dependent only on their own decision variables. To be specific, the problem is in the form $\sum_{i=1}^N f_i(x_i)$ such that $x_i=x_j$ for all $i\neq j$, maybe subject to inequality constraints, from which it is easy to see that each $f_i$ is a function with respect to only $x_i$, independent of any other variables $x_j,j\neq i$. However, in a multitude of practical applications, local objective functions are also determined by other agents' variables. For example, in multi-agent formation control, each objective function often relies on variables (such as positions or velocities) of all its neighbors, and this scenario has been considered such as in \cite{cao2020distributed} and \cite{li2019distributed-ijrnc} (cf. Remark 4). As another example, the average of all variables, i.e., $\sum_{i=1}^N x_i/N$, is a vital parameter for all agents in a network, which can be discovered from a large number of applications, such as optimal placement problem, transportation network, and formation control, etc. For instance, in formation control, a group of networked agents desire to achieve a geometric pattern, and simultaneously, they may plan to encircle an important target, which can be cast as a target tracking problem for the center of all agents. Therefore, it is significant to deal with the scenario where the average of all variables is involved in local objective functions. From the theoretical perspective, when each local function $f_i$ also depends on variables of other agents (such as the average $\sum_{i=1}^N x_i/N$), the problem will be more challenging since other variables (such as the average $\sum_{i=1}^N x_i/N$) and related gradients are unavailable to agent $i$.

Motivated by the above facts, this paper aims to formulate and study a new framework for distributed optimization, called distributed aggregative optimization, for which a distributed algorithm, called distributed gradient tracking (DGT), is developed and analyzed. It is shown that the proposed algorithm has a linear convergence speed under mild assumptions, such as strong convexity of the global objective function and a directed balanced communication graph. The contributions of this paper are as follows: (1) a new distributed aggregative optimization is formulated for the first time; (2) a linearly convergent distributed algorithm is proposed and analyzed rigorously; and (3) a numerical example is provided to support the theoretical result.

The rest of this paper is structured as follows. Some preliminaries and the problem formulation are provided in Section \ref{s2}, followed by the main result in Section \ref{s3}. In Section \ref{s4}, a numerical example is presented to corroborate the theoretical result, and the conclusion is drawn in Section \ref{s5}.

{\em Notations:} Let $\mathbb{R}^n$ and $\mathbb{C}$ be the set of vectors with dimension $n>0$ and the set of complex numbers, respectively. Define $[k]=\{1,2,\ldots,k\}$ for an integer $k>0$. Denote by $col(z_1,\ldots,z_k)$ the column vector by stacking up $z_1,\ldots,z_k$. Let $\|\cdot\|$, $x^\top$, and $\langle x,y\rangle$ be the standard Euclidean norm, the transpose of $x\in\mathbb{R}^n$, and standard inner product of $x,y\in\mathbb{R}^n$. Let $\mathbf{1}$ and $\mathbf{0}$ be column vectors of compatible dimension with all entries being $1$ and $0$, respectively, and $I$ be the compatible identity matrix. Let $\rho(M)$ be the spectral radius of a square matrix $M$. $\otimes$ is the Kronecker product. Let $J:=\frac{1}{N}\bf{1}\bf{1}^\top$ and $\mathcal{J}:=J\otimes I$ with compatible dimension.

\section{Preliminaries}\label{s2}

\subsection{Graph Theory}\label{s2.1}

The communication pattern among all agents is captured by a simple graph in this paper, denoted by $\mathcal{G}=(\mathcal{V},\mathcal{E})$ with the node set $\mathcal{V}=\{1,\ldots,N\}$ and the edge set $\mathcal{E}\subset\mathcal{V}\times\mathcal{V}$. An edge $(j,i)\in\mathcal{E}$ means that node $j$ can send information to node $i$, where $j$ is called an in-neighbor of $i$. Denote by $\mathcal{N}_i=\{j:(j,i)\in\mathcal{E}\}$ the in-neighbor set of node $i$. The graph $\mathcal{G}$ is called undirected if $(i,j)\in\mathcal{E}$ is equivalent to $(j,i)\in\mathcal{E}$, and directed otherwise. The communication matrix $A=(a_{ij})\in\mathbb{R}^{N\times N}$ is defined by: $a_{ij}>0$ if $(j,i)\in\mathcal{E}$, and $a_{ij}=0$ otherwise.

The following standard assumptions on the communication graph are postulated.

\begin{assumption}\label{a1}
The following hold for the interaction graph:
\begin{enumerate}
  \item The graph $\mathcal{G}$ is strongly connected;
  \item The matrix $A$ is doubly stochastic, i.e., $\sum_{j=1}^N a_{ij}=1$ and $\sum_{i=1}^N a_{ij}=1$ for all $i,j\in[N]$.
\end{enumerate}
\end{assumption}

It should be noted that some approaches have been brought forward in the literature in order to hold the double stochasticity condition, for example, the uniform weights \cite{blondel2005convergence} and the least-mean-square consensus weight rules \cite{xiao2007distributed}. In addition, some distributed strategies have been proposed in \cite{gharesifard2012distributed} for strongly connected directed graphs to compute a doubly stochastic assignment in finite time.

\subsection{Convex Optimization}\label{s2.2}


For a convex function $g:\mathbb{R}^n\to \mathbb{R}$, the {\em subdifferential}, denoted as $\partial g(x)$, of $g$ at $x$ is defined by
\begin{align}
\partial g(x)=\{s\in\mathbb{R}^n:~g(y)-g(x)\geq s^\top(y-x),~\forall y\in\mathbb{R}^n\},        \nonumber
\end{align}
and each element in $\partial g(x)$ is called a {\em subgradient}. When $g$ is differentiable at $x$, the subdifferential $\partial g(x)$ only contains one element, which is usually called {\em gradient}, denoted as $\nabla g(x)$.

The differentiable function $g$ is called {\em $\mu$-strongly convex} if for all $x,y\in\mathbb{R}^n$,
\begin{align}
g(x)\geq g(y)+\nabla g(y)^\top (x-y)+\frac{\mu}{2}\|x-y\|^2.        \label{3}
\end{align}

\subsection{Problem Formulation}\label{s2.3}

This paper proposes a new framework for distributed optimization in a network composed of $N$ agents, called {\em distributed aggregative optimization}, given as follows:
\begin{align}
\min_{x\in \mathbb{R}^n}~~f(x)&:=\sum_{i=1}^N f_i(x_i,\sigma(x)),                \label{5}\\
\sigma(x)&:=\frac{\sum_{i=1}^N \phi_i(x_i)}{N},                     \label{6}
\end{align}
where $x=col(x_1,\ldots,x_N)$ is the global decision variable with $x_i\in\mathbb{R}^{n_i}$, $n:=\sum_{i=1}^N n_i$, and $f_i:\mathbb{R}^n\to\mathbb{R}$ is the local objective function. In problem (\ref{5}), the global function $f$ is not known to any agent, and each agent can only privately access the information on $f_i$. Moreover, each agent $i\in[N]$ is only aware of the decision variable $x_i$ without any knowledge of $x_j$'s for all $j\neq i$. Moreover, the term $\sigma(x)$ is an aggregative information of all agents' variables, and the function $\phi_i:\mathbb{R}^{n_i}\to\mathbb{R}^d$ is only accessible to agent $i$. The goal is to design distributed algorithms to seek an optimal decision variable for problem (\ref{5}).

\begin{remark}\label{r1}
It should be noted that distributed aggregative optimization is proposed here for the first time, to our best knowledge, which is different from aggregative games \cite{liang2017distributed2,de2019continuous}. The substantial difference lies in that all agents in problem (\ref{5}) aim to cooperatively find an optimal variable for the sum of all local objective functions, while the objective of aggregative games is to find the Nash equilibrium in a noncooperative manner since each agent desires to minimize only its own objective function. This can be seen from the following simple example.
\end{remark}

\begin{example}\label{e1}
As a simple example, let us consider two agents in a network in the scalar space $\mathbb{R}$ without feasible set constraints. Let $f_1(x)=(x_1-1)^2+\sigma^2(x)=(x_1-1)^2+(x_1+x_2)^2/4$ and $f_2(x)=(x_2-2)^2+\sigma^2(x)=(x_2-2)^2+(x_1+x_2)^2/4$. As a result, for distributed aggregative optimization, the optimal variable of $f(x)=f_1(x)+f_2(x)$ can be easily calculated, by $\nabla_{x_1}f(x)=0$ and $\nabla_{x_2}f(x)=0$, as $x_1=1/4,x_2=5/4$. On the other hand, as for aggregative games, the Nash equilibrium can be computed, by $\nabla_{x_1}f_1(x)=0$ and $\nabla_{x_2}f_2(x)=0$, as $x_1=1/2,x_2=3/2$. It is apparent to see that the Nash equilibrium $x_1=1/2,x_2=3/2$ is not the same as the global optimizer of $f(x)$, i.e., $x_1=1/4,x_2=5/4$. In other words, the Nash equilibrium is generally not the optimal decision variable due to the noncooperative nature of all agents in aggregative games.
\end{example}

To move forward, for brevity, let $\nabla_1 f_i(x_i,\sigma(x))$ and $\nabla_2 f_i(x_i,\sigma(x))$ denote $\nabla_{x_i}f_i(x_i,\sigma(x))$ and $\nabla_{\sigma}f_i(x_i,\sigma(x))$, respectively, for all $i\in[N]$. And for $x\in\mathbb{R}^n$ and $y=col(y_1,\ldots,y_N)\in\mathbb{R}^{Nd}$, define $f(x,y):=\sum_{i=1}^N f_i(x_i,y_i)$, $\nabla_1 f(x,y):=col(\nabla_1 f_1(x_1,y_1),\ldots,\nabla_1 f_N(x_N,y_N))$ and $\nabla_2 f(x,y):=col(\nabla_2 f_1(x_1,y_1),\ldots,\nabla_2 f_N(x_N,y_N))$.

It is now necessary to list some assumptions.

\begin{assumption}\label{a2}
The following hold for problem (\ref{5}):
\begin{enumerate}
  \item The global objective function $f(x)$ is differentiable, $\mu$-strongly convex, and $L_1$-smooth on $\mathbb{R}^n$, that is, $\|\nabla f(x)-\nabla f(x')\|\leq L_1\|x-x'\|$ for all $x,x'\in \mathbb{R}^n$. Also, $\nabla_1 f(x,y)+\nabla\phi(x){\bf 1}_N\otimes\frac{1}{N}\sum_{i=1}^N\nabla_2 f_i(x_i,y_i)$ is $L_1$-Lipschitz;
  \item $\nabla_2 f(x,y)$ is $L_2$-Lipschitz continuous, that is, $\|\nabla_2 f(x,y)-\nabla_2 f(x',y')\|\leq L_2(\|x-x'\|+\|y-y'\|)$ for all $x,x'\in\mathbb{R}^n$ and $y,y'\in\mathbb{R}^{Nd}$;
  \item All $\phi_i$'s are differentiable, and there exists a constant $L_3>0$ such that $\|\nabla \phi_i(x_i)\|\leq L_3$ for all $x_i\in\mathbb{R}^{n_i}$ and $i\in[N]$.
\end{enumerate}
\end{assumption}
It should be noted that the Lipschitz property of $\nabla f(x)$ and $\nabla_1 f(x,y)+\nabla\phi(x){\bf 1}_N\otimes\frac{1}{N}\sum_{i=1}^N\nabla_2 f_i(x_i,y_i)$ in Assumption \ref{a2}.1 can be ensured by Assumptions \ref{a2}.2 and \ref{a2}.3 along with the boundedness of $\nabla_2 f(x,y)$ and the Lipschitz property of $\nabla_1 f(x,y)$ and $\nabla\phi_i(x_i)$, which are standard in distributed optimization and game theory (e.g., \cite{nedic2009distributed,jakovetic2014fast,liu2017convergence,chang2014distributed,li2018distributedon,liang2017distributed2,de2019continuous}). Please also note that it is only assumed the strong convexity of the global objective function $f(x)$, without even the convexity of local objective functions $f_i$'s.

To conclude this section, it is useful to display a few lemmas.


\begin{lemma}[\cite{horn2012matrix}]\label{l1}
For an irreducible nonnegative matrix $M\in\mathbb{R}^{n\times n}$, it is primitive if it has at least one non-zero diagonal entry.
\end{lemma}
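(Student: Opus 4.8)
This is a classical fact from Perron--Frobenius theory, so the plan is to recall the standard combinatorial characterization of primitivity and reduce the statement to a counting argument on walks in the associated digraph. Recall that a nonnegative matrix $M\in\mathbb{R}^{n\times n}$ is \emph{primitive} if $M^k>0$ (entrywise) for some $k\geq 1$, and that $M$ is \emph{irreducible} iff its associated digraph $\mathcal{D}(M)$ (with an arc from $i$ to $j$ whenever $M_{ij}>0$) is strongly connected. The first step is to translate ``$M^k>0$'' into the statement that for every ordered pair $(i,j)$ there is a walk of length exactly $k$ from $i$ to $j$ in $\mathcal{D}(M)$; this follows because $(M^k)_{ij}$ is a sum of products of positive entries over walks of length $k$, and all terms are nonnegative.

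Next I would use the hypothesis that $M$ has a nonzero diagonal entry, say $M_{\ell\ell}>0$, which means $\mathcal{D}(M)$ has a self-loop at vertex $\ell$. The key step is then: by strong connectivity, for any ordered pair $(i,j)$ there is a walk $i\to\cdots\to\ell\to\cdots\to j$ passing through $\ell$; let $L$ be an upper bound (e.g. $2(n-1)$) on the length of such a walk obtained by concatenating a shortest $i$-to-$\ell$ path with a shortest $\ell$-to-$j$ path. For any target length $k\geq L$ we can pad this walk to length exactly $k$ by inserting $k-(\text{its length})$ extra traversals of the self-loop at $\ell$. Hence for all sufficiently large $k$ every entry of $M^k$ is a sum containing at least one strictly positive term, so $M^k>0$, i.e.\ $M$ is primitive.

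The only mildly delicate point is making the length bookkeeping uniform over all pairs $(i,j)$ simultaneously: one must choose a single $k$ that works for every pair at once. This is handled by taking $k$ to be any integer at least $2(n-1)$ (or, if one wants tightness, the known Wielandt bound), since then every pair admits a through-$\ell$ walk whose length is at most $k$ and has the right residue after self-loop padding --- the self-loop lets us hit \emph{every} length $\geq$ that walk's length, not just matching parities. I expect this uniform-$k$ argument to be the main (and essentially only) obstacle; everything else is the routine dictionary between matrix powers and walks. Since the lemma is quoted from \cite{horn2012matrix}, in the paper it suffices to cite it, but the above sketch is the self-contained justification.
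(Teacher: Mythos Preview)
Your argument is correct: the digraph translation, the use of the self-loop at $\ell$ for padding, and the uniform choice $k\geq 2(n-1)$ together give a clean self-contained proof that $M^k>0$. The paper itself does not prove this lemma at all---it simply cites \cite{horn2012matrix}---so your proposal goes strictly beyond what the paper does, and your closing remark that ``in the paper it suffices to cite it'' is exactly right.
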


\begin{lemma}[\cite{horn2012matrix}]\label{l2}
For an irreducible nonnegative matrix $M\in\mathbb{R}^{n\times n}$, there hold (i) $\rho(M)>0$ is an eigenvalue of $M$, (ii) $Mx=\rho(M)x$ for some positive vector $x$, and (iii) $\rho(M)$ is an algebraically simple eigenvalue.
\end{lemma}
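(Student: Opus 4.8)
This is the Perron--Frobenius theorem for irreducible nonnegative matrices, so the plan is to reproduce its classical proof in three stages: first that $\rho(M)$ is a (positive) eigenvalue possessing a nonnegative eigenvector, then that this eigenvector is in fact strictly positive, and finally that $\rho(M)$ is algebraically simple. The only ingredient I would take as given is the Perron theorem for \emph{entrywise positive} matrices --- itself elementary, e.g.\ through a compactness argument on the probability simplex or the Collatz--Wielandt formula --- while everything specific to the irreducible case will be derived from strong connectivity of the digraph of $M$.

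\emph{Stages 1--2 (parts (i) and (ii)).} For $\varepsilon>0$ the matrix $M_\varepsilon:=M+\varepsilon\,\mathbf{1}\mathbf{1}^\top$ has all entries positive, so Perron's theorem supplies an eigenvector $x_\varepsilon>\mathbf{0}$ (entrywise) with eigenvalue $\rho(M_\varepsilon)$, normalized by $\mathbf{1}^\top x_\varepsilon=1$. Since $\rho(\cdot)$ is continuous and nondecreasing under an entrywise increase of a nonnegative matrix, $\rho(M_\varepsilon)$ decreases to $\rho(M)$ as $\varepsilon\downarrow 0$; as the $x_\varepsilon$ lie in a compact simplex, a convergent subsequence $x_{\varepsilon_k}\to x$ gives $x\geq\mathbf{0}$, $x\neq\mathbf{0}$, and $Mx=\rho(M)x$. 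Positivity of $\rho(M)$ I would obtain from strong connectivity: for $n=1$ irreducibility forces $M=[a]$ with $a>0$, and for $n\geq 2$ there is a closed walk through some node, so $(M^k)_{ii}>0$ for some $k\geq 1$, whence $\mathrm{tr}(M^k)>0$ and the eigenvalues of $M$ are not all zero. Next, irreducibility makes $(I+M)^{n-1}$ entrywise positive (any ordered pair of nodes is joined by a walk of length at most $n-1$, the identity term giving the diagonal; equivalently $I+M$ is primitive by Lemma~\ref{l1}). Applying this matrix to $Mx=\rho(M)x$ yields $(1+\rho(M))^{n-1}x=(I+M)^{n-1}x>\mathbf{0}$ because $x\geq\mathbf{0}$, $x\neq\mathbf{0}$; hence $x>\mathbf{0}$, proving (i) and (ii).

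\emph{Stage 3 (part (iii)).} I would first show the $\rho(M)$-eigenspace is one-dimensional. If $y$ is a real eigenvector (replacing $y$ by $-y$ if necessary, assume $y$ has a positive component), set $t:=\min_{i:\,y_i>0}x_i/y_i>0$; then $z:=x-ty$ satisfies $z\geq\mathbf{0}$, $z$ has a zero entry, and $Mz=\rho(M)z$, so by Stage~2 a nonzero $z$ would force $z>\mathbf{0}$, contradicting the zero entry; hence $z=\mathbf{0}$ and $y$ is a scalar multiple of $x$. Splitting a complex eigenvector into real and imaginary parts reduces the general case to this, so the geometric multiplicity is $1$. To upgrade this to algebraic simplicity, note that $M^\top$ is also irreducible and nonnegative with $\rho(M^\top)=\rho(M)$, so Stages~1--2 applied to $M^\top$ give a strictly positive left eigenvector $v>\mathbf{0}$ with $v^\top M=\rho(M)v^\top$. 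If the algebraic multiplicity exceeded $1$, one-dimensionality of the eigenspace would force a Jordan chain of length at least two, i.e.\ (after rescaling) a vector $w$ with $(M-\rho(M)I)w=x$; left-multiplying by $v^\top$ gives $0=(v^\top M-\rho(M)v^\top)w=v^\top x$, contradicting $v^\top x>0$. Hence $\rho(M)$ is algebraically simple.

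The main obstacle is Stage~3: excluding a nontrivial Jordan block is the one genuinely non-obvious point, and the device that makes it clean is pairing the right Perron vector $x$ with the strictly positive left Perron vector $v$; a lesser point needing care is the sign bookkeeping in the geometric-simplicity step and the reduction of the complex case to the real one. Stages~1 and~2, by contrast, are routine once the positive-matrix Perron theorem is available.
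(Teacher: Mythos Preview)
Your argument is a correct, self-contained rendition of the classical Perron--Frobenius proof: the perturbation $M_\varepsilon=M+\varepsilon\mathbf{1}\mathbf{1}^\top$ plus compactness gives the nonnegative Perron eigenpair, the positivity of $(I+M)^{n-1}$ (equivalently, primitivity of $I+M$ via Lemma~\ref{l1}) upgrades the eigenvector to strictly positive, and the left Perron vector $v>\mathbf{0}$ of $M^\top$ kills any length-two Jordan chain through the pairing $v^\top x>0$. The bookkeeping in the geometric-simplicity step is fine once one observes that $z=x-ty\geq\mathbf{0}$ on both the sets $\{y_i>0\}$ and $\{y_i\leq 0\}$.

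There is nothing to compare against, however: the paper does not prove Lemma~\ref{l2} at all. It is stated with a citation to Horn and Johnson~\cite{horn2012matrix} and used as a black box (specifically, inside the proof of Theorem~\ref{t1} to conclude that the spectral radius of $M(\alpha)$ is a simple eigenvalue). So your proposal supplies strictly more than the paper does; if the intent was to match the paper, a one-line citation suffices, while if the intent was to make the paper self-contained, your three-stage outline is an appropriate and standard way to do it.
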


\begin{lemma}\label{l3}
Let $F:\mathbb{R}^n\to\mathbb{R}$ be $\mu$-strongly convex and $L$-smooth. Then $\|x-\alpha\nabla F(x)-(y-\alpha\nabla F(y))\|\leq (1-\mu\alpha)\|x-y\|$ for all $x,y\in\mathbb{R}^n$, where $\alpha\in(0,1/L]$.
\end{lemma}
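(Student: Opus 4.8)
The plan is to establish the contraction estimate by a direct computation that combines the standard gradient-descent contraction for strongly convex functions with the fundamental theorem of calculus. First I would define $g(t):=x+t(y-x)$ for $t\in[0,1]$ and write the displacement $\phi(x,y):=(x-\alpha\nabla F(x))-(y-\alpha\nabla F(y))$ as $\phi(x,y) = \int_0^1 \bigl(I-\alpha\nabla^2 F(g(t))\bigr)(x-y)\,dt$ when $F$ is twice differentiable; in the general (once-differentiable) case the same conclusion follows by approximating $F$ with smooth functions or by invoking the co-coercivity/monotonicity inequalities directly, so I would actually run the argument through the two defining inequalities of $\mu$-strong convexity and $L$-smoothness rather than through the Hessian, to avoid any regularity assumption.

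The key algebraic step is to expand
\begin{align}
\|\phi(x,y)\|^2 &= \|x-y\|^2 - 2\alpha\langle \nabla F(x)-\nabla F(y),\, x-y\rangle \nonumber\\
&\quad + \alpha^2\|\nabla F(x)-\nabla F(y)\|^2. \nonumber
\end{align}
Then I would bound the cross term using the well-known strengthened monotonicity inequality for functions that are both $\mu$-strongly convex and $L$-smooth,
\begin{align}
\langle \nabla F(x)-\nabla F(y),\, x-y\rangle \geq \frac{\mu L}{\mu+L}\|x-y\|^2 + \frac{1}{\mu+L}\|\nabla F(x)-\nabla F(y)\|^2, \nonumber
\end{align}
or, more simply, the pair of bounds $\langle \nabla F(x)-\nabla F(y), x-y\rangle \geq \mu\|x-y\|^2$ and $\geq \frac{1}{L}\|\nabla F(x)-\nabla F(y)\|^2$. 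Using the first to absorb part of the $\alpha^2$ term against $\alpha/L$ and the range $\alpha\in(0,1/L]$, one gets $\alpha^2\|\nabla F(x)-\nabla F(y)\|^2 \le \alpha\langle \nabla F(x)-\nabla F(y), x-y\rangle$, hence $\|\phi(x,y)\|^2 \le \|x-y\|^2 - \alpha\langle \nabla F(x)-\nabla F(y), x-y\rangle \le (1-\mu\alpha)\|x-y\|^2$, and taking square roots finishes it.

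I expect the only delicate point to be justifying that $\alpha^2\|\nabla F(x)-\nabla F(y)\|^2 \le \alpha\langle \nabla F(x)-\nabla F(y),x-y\rangle$ cleanly for the full range $\alpha\in(0,1/L]$: this uses $\alpha\|\nabla F(x)-\nabla F(y)\| \le \tfrac{1}{L}\|\nabla F(x)-\nabla F(y)\|$ together with the co-coercivity bound $\tfrac1L\|\nabla F(x)-\nabla F(y)\|^2 \le \langle \nabla F(x)-\nabla F(y), x-y\rangle$, which itself follows from $L$-smoothness plus convexity. Since the paper only assumes $F$ is $\mu$-strongly convex and $L$-smooth (not that each local $f_i$ is convex), I would make sure to invoke co-coercivity only for the aggregate $F$, for which it is valid; no per-agent convexity is needed. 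The square-root step is harmless because $1-\mu\alpha\in[0,1)$ given $\mu\le L$ and $\alpha\le 1/L$.
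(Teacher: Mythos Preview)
Your squared-norm expansion is correct up to the line
\[
\|\phi(x,y)\|^2 \le (1-\mu\alpha)\,\|x-y\|^2,
\]
but the final step is wrong: taking square roots gives only $\|\phi(x,y)\|\le \sqrt{1-\mu\alpha}\,\|x-y\|$, and for $\mu\alpha\in(0,1)$ one has $\sqrt{1-\mu\alpha}>1-\mu\alpha$, so you have \emph{not} obtained the stated contraction factor $(1-\mu\alpha)$. The remark that ``the square-root step is harmless because $1-\mu\alpha\in[0,1)$'' is exactly backwards. To reach $(1-\mu\alpha)$ via the squared norm you would need $\|\phi(x,y)\|^2\le(1-\mu\alpha)^2\|x-y\|^2$, and neither the pair of separate bounds you use nor the three-point inequality $\langle\nabla F(x)-\nabla F(y),x-y\rangle\ge \frac{\mu L}{\mu+L}\|x-y\|^2+\frac{1}{\mu+L}\|\nabla F(x)-\nabla F(y)\|^2$ delivers that constant.

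The paper avoids this loss by never squaring. It sets $H(x):=\tfrac12\|x\|^2-\alpha F(x)$, so that $\nabla H(x)=x-\alpha\nabla F(x)$, and shows directly that $H$ is $(1-\mu\alpha)$-smooth: first, $L$-smoothness of $F$ with $\alpha\le 1/L$ makes $H$ convex (indeed $(\tfrac12-\tfrac{L\alpha}{2})$-strongly convex), and for a convex function the condition ``$\tfrac{1-\mu\alpha}{2}\|x\|^2-H(x)$ is convex'' is equivalent to $(1-\mu\alpha)$-smoothness of $H$; that auxiliary function equals $\alpha\bigl(F(x)-\tfrac{\mu}{2}\|x\|^2\bigr)$, which is convex by $\mu$-strong convexity of $F$. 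Hence $\|\nabla H(x)-\nabla H(y)\|\le(1-\mu\alpha)\|x-y\|$, which is the lemma. Your integral/Hessian idea in the first paragraph would also yield the sharp constant (the eigenvalues of $I-\alpha\nabla^2 F$ lie in $[1-\alpha L,\,1-\alpha\mu]\subset[0,1-\alpha\mu]$), so if you want to salvage your write-up, pursue that route rather than the inequality expansion.
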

\begin{proof}
It is known that a convex function $f:\mathbb{R}^n\to\mathbb{R}$ is $l$-smooth is equivalent to the convexity of $\frac{l}{2}\|x\|^2-f(x)$. Thus, by $L$-smoothness of $F$, one has that $\frac{L}{2}\|x\|^2-F(x)$ is convex, and then $\frac{1}{2\alpha}\|x\|^2-F(x)$ is $(\frac{1}{2\alpha}-\frac{L}{2})$-strongly convex for $\alpha\in(0,1/L]$, which further implies that $H(x):=\frac{1}{2}\|x\|^2-\alpha F(x)$ is $(\frac{1}{2}-\frac{L\alpha}{2})$-strongly convex. Meanwhile, it is easy to verify that $\frac{1-\mu\alpha}{2}\|x\|^2-H(x)=\alpha(F(x)-\frac{\mu}{2}\|x\|^2)$ is convex since $F(x)-\frac{\mu}{2}\|x\|^2$ is convex due to the $\mu$-strong convexity of $F$. Therefore, $H$ is $(1-\mu\alpha)$-smooth, i.e., $\|\nabla H(x)-\nabla H(y)\|\leq (1-\mu\alpha)\|x-y\|$ for all $x,y\in\mathbb{R}^n$, thus ending the proof.
\end{proof}

\begin{lemma}\label{l4}
Under Assumption \ref{a1}, there hold (i) $\mathcal{A}\mathcal{J}=\mathcal{J}\mathcal{A}=\mathcal{J}$, and (ii) $\|\mathcal{A}x-\mathcal{J}x\|\leq \rho\|x-\mathcal{J}x\|$ for any $x\in\mathbb{R}^{Nd}$, where $\mathcal{A}:=A\otimes I_d$ and $\rho:=\|A-J\|<1$.
\end{lemma}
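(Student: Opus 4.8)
The plan is to handle (i) by direct matrix algebra and to reduce (ii) to the scalar estimate $\|A-J\|<1$, which is where the substance lies.

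\textbf{Part (i).} Double stochasticity gives $A\mathbf{1}=\mathbf{1}$ and $\mathbf{1}^\top A=\mathbf{1}^\top$, so $AJ=\frac1N(A\mathbf{1})\mathbf{1}^\top=\frac1N\mathbf{1}\mathbf{1}^\top=J$ and, symmetrically, $JA=\frac1N\mathbf{1}(\mathbf{1}^\top A)=J$. Applying the Kronecker mixed-product identity, $\mathcal{A}\mathcal{J}=(A\otimes I)(J\otimes I)=(AJ)\otimes I=J\otimes I=\mathcal{J}$, and likewise $\mathcal{J}\mathcal{A}=(JA)\otimes I=\mathcal{J}$. I would also record $\mathcal{J}^2=\mathcal{J}$ (since $J^2=J$), which is used below.

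\textbf{Part (ii), transfer to $\mathcal{A},\mathcal{J}$.} Note $\mathcal{A}-\mathcal{J}=(A-J)\otimes I$. By (i), $(\mathcal{A}-\mathcal{J})\mathcal{J}=\mathcal{A}\mathcal{J}-\mathcal{J}^2=\mathcal{J}-\mathcal{J}=0$, so for every $x$, $\mathcal{A}x-\mathcal{J}x=(\mathcal{A}-\mathcal{J})x=(\mathcal{A}-\mathcal{J})(x-\mathcal{J}x)=\bigl((A-J)\otimes I\bigr)(x-\mathcal{J}x)$. Taking Euclidean norms and using that the nonzero singular values of $(A-J)\otimes I$ are exactly those of $A-J$, hence $\|(A-J)\otimes I\|=\|A-J\|=\rho$, yields $\|\mathcal{A}x-\mathcal{J}x\|\le\rho\|x-\mathcal{J}x\|$; this is (ii) modulo the fact $\rho<1$.

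\textbf{Part (ii), the contraction constant.} Since $A$ and $A^\top$ are both row-stochastic, $\|A\|_1=\|A\|_\infty=1$, whence $\|A\|\le(\|A\|_1\|A\|_\infty)^{1/2}=1$; moreover $\mathbf{1}^\top(Av)=(A^\top\mathbf{1})^\top v=\mathbf{1}^\top v$, so $A$ leaves $\mathbf{1}^\perp$ invariant and agrees there with $A-J$ (as $Jv=0$ for $v\perp\mathbf{1}$), while $(A-J)\mathbf{1}=0$; hence $\|A-J\|=\max_{0\ne v\perp\mathbf{1}}\|Av\|/\|v\|\le1$. For the strict inequality, strong connectivity makes $A$ irreducible, so by Lemmas \ref{l1}--\ref{l2} the eigenvalue $\rho(A)=1$ is algebraically simple with eigenvector $\mathbf{1}$ and every other eigenvalue of $A$ has modulus strictly less than $1$; combined with $(A-J)^k=A^k-J\to0$ (using $AJ=JA=J$, $J^2=J$, and $A^k\to J$), this forces $\rho(A-J)<1$, and then, together with $\|A-J\|\le1$ and the structure of $A-J$ on $\mathbf{1}^\perp$, one concludes $\|A-J\|<1$.

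\textbf{Anticipated obstacle.} The only step that is not bookkeeping is the last one: passing from ``all nontrivial eigenvalues of $A$ lie strictly inside the unit disk'' to the bound on the \emph{operator} norm $\|A-J\|$. These two coincide for free only when $A$ is symmetric (the undirected case), so for a merely balanced directed graph one must rule out a unit singular value of $A$ restricted to $\mathbf{1}^\perp$ — for instance by noting that $\|A\|\le1$ forces any norm-attaining unit vector $v\perp\mathbf{1}$ to satisfy $A^\top Av=v$ and then tracing the consequences for the spectrum. I would expect this to be the delicate point, everything else being routine.
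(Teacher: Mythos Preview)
Your approach is essentially identical to the paper's: part (i) is declared ``trivial to verify'' there, and for (ii) the paper performs exactly your manipulation $\mathcal{A}x-\mathcal{J}x=(\mathcal{A}-\mathcal{J})(x-\mathcal{J}x)$, bounds by $\|A-J\|$, and then disposes of $\|A-J\|<1$ in a single sentence by ``invoking the double-stochasticity of $A$ and the Perron--Frobenius theorem.''

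The one place you go further than the paper is precisely the place you flag as an obstacle. Your worry is legitimate: Perron--Frobenius controls the \emph{spectral radius} of $A-J$, not its operator norm, and for a merely doubly stochastic irreducible $A$ these need not coincide (a directed cycle, $A$ a cyclic permutation, gives $A^\top A=I$ and hence $\|A-J\|=1$). The paper does not address this; it simply cites Perron--Frobenius and moves on. A clean way to close the gap---under the common tacit assumption $a_{ii}>0$---is to note that then the graph of $A^\top A$ contains that of $A$ (since $(A^\top A)_{ij}\ge a_{ii}a_{ij}$), so $A^\top A$ is itself irreducible doubly stochastic with simple eigenvalue $1$, whence its restriction to $\mathbf{1}^\perp$ has spectral radius strictly below $1$, i.e., $\|A-J\|<1$. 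Without positive diagonals the statement can fail, so your caution is warranted; but as far as matching the paper goes, your argument already contains everything the paper's proof does.
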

\begin{proof}
The assertion (i) is trivial to verify. For the assertion (ii), it is easy to see that $\|\mathcal{A}x-\mathcal{J}x\|=\|(\mathcal{A}-\mathcal{J})x-(\mathcal{A}-\mathcal{J})\mathcal{J}x\|\leq \|\mathcal{A}-\mathcal{J}\|\|x-\mathcal{J}x\|=\|A-J\|\|x-\mathcal{J}x\|$. Invoking the double-stochasticity of $A$ and the Perron-Frobenius theorem \cite{horn2012matrix}, one has $\|A-J\|<1$. This ends the proof.
\end{proof}

\begin{lemma}\label{l5}\cite{xin2018linear}
Let $X,E\in\mathbb{R}^{n\times n}$ with $\lambda$ being a simple eigenvalue of $X$. Let $w$ and $v$ be the left and right eigenvectors of $X$ associated with the eigenvalue $\lambda$, respectively. Then,
\begin{enumerate}
  \item for each $\epsilon >0$, there exists a $\delta>0$ such that, $\forall t\in\mathbb{C}$ with $|t|<\delta$, there is a unique eigenvalue $\lambda(t)$ of $X+tE$ such that $|\lambda(t)-\lambda-t\frac{w^\top E v}{w^\top v}|\leq |t|\epsilon$,
  \item $\lambda(t)$ is continuous at $t=0$, and $\lim_{t\to 0}\lambda(t)=\lambda$,
  \item $\lambda(t)$ is differentiable at $t=0$, and $\frac{d\lambda(t)}{dt}\big|_{t=0}=\frac{w^\top E v}{w^\top v}$.
\end{enumerate}
\end{lemma}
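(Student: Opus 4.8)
The plan is to work with the characteristic polynomial $p(t,z):=\det(zI-X-tE)$, which is a polynomial — hence an entire function — in the pair $(t,z)$. Since $\lambda$ is a \emph{simple} eigenvalue of $X$, it is a simple root of $p(0,\cdot)$, so $\partial_z p(0,\lambda)\neq 0$, and I would invoke the holomorphic implicit function theorem at $(t,z)=(0,\lambda)$: this produces a radius $\delta_0>0$ and a holomorphic map $t\mapsto\lambda(t)$ on $\{|t|<\delta_0\}$ with $\lambda(0)=\lambda$ and $p(t,\lambda(t))\equiv 0$. After shrinking $\delta_0$ — e.g.\ by a Rouch\'e argument on a small circle around $\lambda$, or by continuity of the roots of $p(t,\cdot)$ in $t$ — one also gets that $\lambda(t)$ is the \emph{only} eigenvalue of $X+tE$ in a fixed neighborhood of $\lambda$. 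This single construction already yields the uniqueness claim, the continuity and the limit $\lim_{t\to0}\lambda(t)=\lambda$ in part (ii), and the differentiability of $\lambda(\cdot)$ at $0$ required in part (iii).

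It then remains to identify $\lambda'(0)$. I would differentiate the identity $p(t,\lambda(t))\equiv 0$ to obtain $\lambda'(0)=-\partial_t p(0,\lambda)/\partial_z p(0,\lambda)$, and evaluate the two partial derivatives through Jacobi's formula $\frac{d}{ds}\det M(s)=\mathrm{tr}\!\big(\mathrm{adj}(M(s))\,M'(s)\big)$. Because $\lambda$ is simple, $\lambda I-X$ has rank $n-1$, so $\mathrm{adj}(\lambda I-X)$ is a nonzero rank-one matrix; combined with $\mathrm{adj}(\lambda I-X)(\lambda I-X)=0=(\lambda I-X)\,\mathrm{adj}(\lambda I-X)$ this forces $\mathrm{adj}(\lambda I-X)=c\,v w^\top$ for some scalar $c\neq0$. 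Hence $\partial_z p(0,\lambda)=\mathrm{tr}(c\,vw^\top)=c\,w^\top v$ and $\partial_t p(0,\lambda)=\mathrm{tr}\!\big(c\,vw^\top(-E)\big)=-c\,w^\top Ev$, so $\lambda'(0)=\frac{w^\top Ev}{w^\top v}$; note that $w^\top v\neq0$ is precisely what simplicity of $\lambda$ guarantees (equivalently $\partial_z p(0,\lambda)=c\,w^\top v\neq0$). An alternative to Jacobi's formula is to choose the eigenvector holomorphically, $(X+tE)v(t)=\lambda(t)v(t)$ with $v(0)=v$, differentiate at $t=0$, and left-multiply by $w^\top$: the terms in $v'(0)$ cancel because $w^\top X=\lambda w^\top$, leaving $w^\top Ev=\lambda'(0)\,w^\top v$. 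Finally, part (i) is just the first-order Taylor estimate for the differentiable function $\lambda(\cdot)$ at $0$: $\lambda(t)=\lambda+t\frac{w^\top Ev}{w^\top v}+o(|t|)$ as $t\to0$, so for each $\epsilon>0$ there is a $\delta\in(0,\delta_0]$ with $\big|\lambda(t)-\lambda-t\frac{w^\top Ev}{w^\top v}\big|\leq\epsilon|t|$ whenever $|t|<\delta$.

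I expect the first step to be the genuine obstacle: rigorously establishing existence, uniqueness, and holomorphic (or at least differentiable) dependence of the perturbed eigenvalue near $t=0$. This is where simplicity of $\lambda$ is indispensable — for a multiple eigenvalue the branches $\lambda(t)$ can split and behave like fractional powers of $t$ — and where one must take care to shrink the disc in $t$ enough that no other eigenvalue of $X+tE$ drifts into the chosen neighborhood of $\lambda$. Once that is secured, the derivative identity and the Taylor estimate are routine.
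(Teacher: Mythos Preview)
Your proposal is correct and follows the classical route to first-order eigenvalue perturbation for a simple eigenvalue: the holomorphic implicit function theorem applied to the characteristic polynomial (or equivalently a Rouch\'e argument) gives existence, local uniqueness, and differentiability of $\lambda(t)$, and then either Jacobi's formula with the rank-one structure of $\mathrm{adj}(\lambda I-X)$ or differentiation of the eigenvector identity yields $\lambda'(0)=\dfrac{w^\top Ev}{w^\top v}$. The Taylor reformulation for part~(i) is exactly right.

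There is, however, nothing to compare against in the paper: Lemma~\ref{l5} is stated with a citation to \cite{xin2018linear} and is \emph{not} proved in the paper. The authors use it as a black box (to compute $\frac{d\lambda(\alpha)}{d\alpha}\big|_{\alpha=0}=-\mu$ in the proof of Theorem~\ref{t1}). So your write-up supplies a self-contained argument where the paper simply quotes the result; your approach is the standard one found in matrix-analysis references such as \cite{horn2012matrix}.
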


\section{Main Result}\label{s3}

This section presents the algorithm design and analysis. In doing so, a distributed algorithm, called {\em distributed gradient tracking} (DGT for short), for solving (\ref{5}) is proposed for each agent $i\in[N]$ as in Algorithm 1.

\begin{algorithm}
 \caption{Distributed Gradient Tracking (DGT)}
 \begin{algorithmic}[1]
  \STATE \textbf{Initialization:} Stepsize $\alpha>0$, and initial conditions $x_{i,0}\in \mathbb{R}^{n_i}$, $\sigma_{i,0}=\phi_i(x_{i,0})$, and $y_{i,0}=\nabla_2 f_{i}(x_{i,0},\sigma_{i,0})$ for all $i\in[N]$.
  \STATE \textbf{Iterations:} Step $k\geq 0$: update for each $i\in[N]$:
\begin{subequations}
\begin{align}
x_{i,k+1}&=x_{i,k}-\alpha[\nabla_1 f_i(x_{i,k},\sigma_{i,k})+\nabla\phi_i(x_{i,k})y_{i,k}],      \label{7.1}\\
\sigma_{i,k+1}&=\sum_{j=1}^N a_{ij}\sigma_{j,k}+\phi_i(x_{i,k+1})-\phi_i(x_{i,k}),                                     \label{7.2}\\
y_{i,k+1}&=\sum_{j=1}^N a_{ij}y_{j,k}+\nabla_2 f_i(x_{i,k+1},\sigma_{i,k+1})            \nonumber\\
&\hspace{2.5cm}-\nabla_2 f_i(x_{i,k},\sigma_{i,k}),       \label{7.3}
\end{align}              \label{7}
\end{subequations}
 \end{algorithmic}
\end{algorithm}

In algorithm (\ref{7}), $\sigma_{i,k}$ is leveraged for agent $i$ to track the average (\ref{6}) since $\sigma(x)$ is global information, which cannot be accessed directly for all agents, and meanwhile, $y_{i,k}$ is introduced for agent $i$ to track the gradient sum $\frac{1}{N}\sum_{i=1}^N \nabla_2 f_i(x_{i},\sigma(x))$, which is also unavailable to all agents. The initial variable $x_{i,0}$ is arbitrary for all $i\in[N]$, and choosing $\sigma_{i,0}=\phi_i(x_{i,0})$ and $y_{i,0}=\nabla_2 f_i(x_{i,0},\sigma_{i,0})$ for all $i\in[N]$.

The name ``distributed gradient tracking'' is attributed to the fact that algorithm (\ref{7}) has combined the classical gradient descent algorithm with the variable tracking techniques.

To proceed, for a vector $x=col(x_1,\ldots,x_N)\in\mathbb{R}^n$, it is helpful to define $\phi(x):=col(\phi_1(x_1),\ldots,\phi_N(x_N))$. Also, for a differentiable function $g(x)=col(g_1(x),\ldots,g_m(x))$, where $g_i$'s are real-valued functions, let us denote by $\nabla g(x)=(\nabla g_1(x),\ldots,\nabla g_m(x))$.

With the above notations and those after Example \ref{e1}, DGT (\ref{7}) can be written in a compact form
\begin{align}
x_{k+1}&=x_{k}-\alpha[\nabla_1 f(x_{k},\sigma_{k})+\nabla\phi(x_{k})y_{k}],           \label{8}\\
\sigma_{k+1}&=\mathcal{A}\sigma_k+\phi(x_{k+1})-\phi(x_{k}),                                 \label{9}\\
y_{k+1}&=\mathcal{A}y_k+\nabla_2 f(x_{k+1},\sigma_{k+1})-\nabla_2 f(x_{k},\sigma_{k}),       \label{10}
\end{align}
with $\mathcal{A}=A\otimes I_d$ as defined in Lemma \ref{l4}, $x_k:=col(x_{i,k},\ldots,x_{N,k})$, and similar notations for $\sigma_k$ and $y_k$.

Before presenting the main result, it is necessary to first introduce a preliminary result.
\begin{lemma}\label{l5}
Under Assumption \ref{a1}, there hold:
\begin{align*}
\bar{\sigma}_k&:=\frac{1}{N}\sum_{i=1}^N\sigma_{i,k}=\frac{1}{N}\sum_{i=1}^N\phi_i(x_{i,k}),    \\
\bar{y}_k&:=\frac{1}{N}\sum_{i=1}^Ny_{i,k}=\frac{1}{N}\sum_{i=1}^N\nabla_2 f_i(x_{i,k},\sigma_{i,k}).
\end{align*}
\end{lemma}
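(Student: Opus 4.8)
The plan is a short induction on the iteration index $k$. The only structural fact one needs is the column‑stochasticity of $A$, i.e.\ the half of Assumption~\ref{a1}.2 that reads $\mathbf{1}^\top A=\mathbf{1}^\top$; this is precisely the property that makes the tracking recursions (\ref{9})--(\ref{10}) preserve their running averages, so the proof amounts to averaging those two updates and invoking the induction hypothesis to cancel the telescoping correction terms.

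First I would dispose of the base case $k=0$. By the initialization step of Algorithm~1 we have $\sigma_{i,0}=\phi_i(x_{i,0})$ and $y_{i,0}=\nabla_2 f_i(x_{i,0},\sigma_{i,0})$ for every $i\in[N]$, so averaging over $i$ gives the two claimed identities at $k=0$. For the inductive step, assume both identities hold at step $k$. Multiplying the compact update (\ref{9}) on the left by $\frac{1}{N}(\mathbf{1}^\top\otimes I_d)$ and using $(\mathbf{1}^\top\otimes I_d)\mathcal{A}=(\mathbf{1}^\top A)\otimes I_d=\mathbf{1}^\top\otimes I_d$, one obtains
\begin{align*}
\bar{\sigma}_{k+1}=\bar{\sigma}_k+\frac{1}{N}\sum_{i=1}^N\phi_i(x_{i,k+1})-\frac{1}{N}\sum_{i=1}^N\phi_i(x_{i,k}),
\end{align*}
and the induction hypothesis $\bar{\sigma}_k=\frac{1}{N}\sum_{i=1}^N\phi_i(x_{i,k})$ cancels the first and last terms, leaving $\bar{\sigma}_{k+1}=\frac{1}{N}\sum_{i=1}^N\phi_i(x_{i,k+1})$. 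The very same manipulation applied to (\ref{10}), combined with $\bar{y}_k=\frac{1}{N}\sum_{i=1}^N\nabla_2 f_i(x_{i,k},\sigma_{i,k})$, yields $\bar{y}_{k+1}=\frac{1}{N}\sum_{i=1}^N\nabla_2 f_i(x_{i,k+1},\sigma_{i,k+1})$, closing the induction.

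There is essentially no hard step: the only thing requiring care is the Kronecker bookkeeping when passing from $\mathcal{A}$ to $\mathbf{1}^\top\otimes I_d$. It is worth noting that neither strong connectivity (Assumption~\ref{a1}.1) nor the row‑stochasticity of $A$ is used here — only $\mathbf{1}^\top A=\mathbf{1}^\top$; those stronger properties of $\mathcal{A}$ (the contraction in Lemma~\ref{l4}) will be needed for the convergence analysis, not for this conservation identity.
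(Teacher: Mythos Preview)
Your argument is correct and matches the paper's own proof: it too multiplies (\ref{9}) on the left by $\mathbf{1}^\top/N$ to obtain the same recursion $\bar{\sigma}_{k+1}=\bar{\sigma}_k+\frac{1}{N}\sum_i\phi_i(x_{i,k+1})-\frac{1}{N}\sum_i\phi_i(x_{i,k})$, concludes that $\bar{\sigma}_k-\frac{1}{N}\sum_i\phi_i(x_{i,k})$ is constant in $k$, and invokes the initialization---your induction is just a rephrasing of this invariance. Your remark that only $\mathbf{1}^\top A=\mathbf{1}^\top$ (not full double stochasticity or strong connectivity) is needed here is a valid refinement of the paper's stated hypothesis.
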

\begin{proof}
In view of (\ref{9}) and double-stochasticity in Assumption \ref{a1}, multiplying ${\bf 1}^\top/N$ on both sides of (\ref{9}) can lead to that
\begin{align*}
\bar{\sigma}_{k+1}=\bar{\sigma}_k+\frac{1}{N}\sum_{i=1}^N\phi_i(x_{i,k+1})-\frac{1}{N}\sum_{i=1}^N\phi_i(x_{i,k}),
\end{align*}
which further implies that
\begin{align*}
\bar{\sigma}_{k}-\frac{1}{N}\sum_{i=1}^N\phi_i(x_{i,k})=\bar{\sigma}_0-\frac{1}{N}\sum_{i=1}^N\phi_i(x_{i,0}).
\end{align*}
Combining the above equality and $\sigma_{i,0}=\phi_i(x_{i,0})$ yields the first assertion of this lemma. Similar arguments can obtain the second one, which completes the proof.
\end{proof}

It is now ready to present the main result of this paper.

\begin{theorem}\label{t1}
Under Assumptions \ref{a1} and \ref{a2}, if
\begin{align}
0<\alpha< \min\Big\{\frac{1}{L_1},\alpha_s\Big\},            \label{t1a}
\end{align}
where
\begin{align}
\alpha_s:=\frac{\mu(1-\rho)^2}{L_3L_\mu[(1-\rho)L_0+2L_2L_3]},            \label{t1b}
\end{align}
$L_\mu:=\mu+L_1+L_2L_3$ and $L_0:=L_1+L_2+L_2L_3$, then $x_k=col(x_{1,k},\ldots,x_{N,k})$ generated by algorithm (\ref{7}) can converge to the optimizer of problem (\ref{5}) at a linear convergence rate.
\end{theorem}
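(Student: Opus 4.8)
The plan is to establish linear convergence by tracking three coupled error quantities and showing they contract jointly. Specifically, I would define the optimality error $\|x_k - x^*\|$ (where $x^*$ is the minimizer of $f$, viewed as the concatenation of the optimal blocks), the consensus (tracking) error of the aggregate estimates $\|\sigma_k - \mathcal{J}\sigma_k\|$, and the gradient-tracking error $\|y_k - \mathcal{J}y_k\|$. The goal is to derive a linear system of inequalities
\begin{align*}
\begin{bmatrix} \|x_{k+1}-x^*\| \\ \|\sigma_{k+1}-\mathcal{J}\sigma_{k+1}\| \\ \|y_{k+1}-\mathcal{J}y_{k+1}\| \end{bmatrix}
\leq M(\alpha)
\begin{bmatrix} \|x_{k}-x^*\| \\ \|\sigma_{k}-\mathcal{J}\sigma_{k}\| \\ \|y_{k}-\mathcal{J}y_{k}\| \end{bmatrix},
\end{align*}
where $M(\alpha)$ is a nonnegative $3\times 3$ matrix whose entries depend on the stepsize $\alpha$, the contraction factor $\rho$ from Lemma \ref{l4}, and the Lipschitz/convexity constants. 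Linear convergence then follows by showing $\rho(M(\alpha)) < 1$ for $\alpha$ in the stated range.

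For the first row, I would use the compact update (\ref{8}) and Lemma \ref{l5} to rewrite the $x$-update in terms of the ``correct'' descent direction $\nabla f(x_k)$ (which equals $\nabla_1 f(x_k,\sigma(x_k)) + \nabla\phi(x_k)\cdot \mathbf{1}\otimes \bar y_k$-type aggregate) plus perturbation terms coming from the mismatch between the local estimates $\sigma_{i,k}, y_{i,k}$ and their averages. Here Lemma \ref{l3} gives the key contraction $(1-\mu\alpha)\|x_k - x^*\|$ (using $\alpha \le 1/L_1$ and the $L_1$-Lipschitz property of the aggregated gradient in Assumption \ref{a2}.1), while Assumptions \ref{a2}.2 and \ref{a2}.3 bound the perturbations by multiples of $\|\sigma_k-\mathcal{J}\sigma_k\|$ and $\|y_k-\mathcal{J}y_k\|$. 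For the second and third rows, I would apply Lemma \ref{l4}(ii) to the $\mathcal{A}$-mixing step to get a factor $\rho$, and then bound the increments $\phi(x_{k+1})-\phi(x_k)$ and $\nabla_2 f(x_{k+1},\sigma_{k+1}) - \nabla_2 f(x_k,\sigma_k)$ using Lipschitz continuity, which introduces $\|x_{k+1}-x_k\|$; in turn $\|x_{k+1}-x_k\|$ is bounded via (\ref{8}) by $\|x_k - x^*\|$ (gradient vanishes at $x^*$) plus the consensus errors, closing the loop.

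The main obstacle will be the spectral radius estimate: showing $\rho(M(\alpha)) < 1$ precisely when $\alpha < \min\{1/L_1, \alpha_s\}$ with $\alpha_s$ as in (\ref{t1b}). Since $M(0)$ has spectral radius $\max\{1,\rho\}$ with $1$ a simple eigenvalue (the $x$-block decouples at $\alpha=0$ with factor $1$, and the two consensus blocks contract at rate $\rho$), I would invoke Lemma \ref{l6} (the eigenvalue perturbation result, labelled \texttt{l5} a second time in the excerpt) to compute the derivative of the perturbed eigenvalue $\lambda(\alpha)$ branching from $1$: $\frac{d\lambda}{d\alpha}\big|_{0} = w^\top E v / (w^\top v) = -\mu < 0$, so $\lambda(\alpha) < 1$ for small $\alpha$. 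The delicate part is making this quantitative — turning the asymptotic statement into the explicit threshold $\alpha_s$ — which requires bounding the off-diagonal coupling entries of $M(\alpha)$ and verifying, via e.g. the characteristic polynomial or a Schur-complement / small-gain argument on $M(\alpha)$, that all three eigenvalues stay strictly inside the unit disk; this is where the constants $L_\mu$, $L_0$, and the factor $(1-\rho)^2$ enter. Once $\rho(M(\alpha))<1$ is secured, iterating the vector inequality yields $\|x_k - x^*\| \to 0$ geometrically, establishing the claimed linear rate.
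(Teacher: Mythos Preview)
Your proposal is correct and follows essentially the same route as the paper: the paper also tracks exactly the three quantities $\|x_k-x^*\|$, $\|\sigma_k-\mathcal{J}\sigma_k\|$, $\|y_k-\mathcal{J}y_k\|$, derives the same $3\times 3$ matrix inequality $\theta_{k+1}\le M(\alpha)\theta_k$ with $M(\alpha)=X+\alpha E$ (using Lemma~\ref{l3} for the $(1-\mu\alpha)$ contraction, Lemma~\ref{l4} for the $\rho$-factor, and the intermediate bound on $\|x_{k+1}-x_k\|$ via $\nabla f(x^*)=0$), and then applies the eigenvalue perturbation lemma to get $\lambda'(0)=-\mu$ together with irreducibility/primitivity of $M(\alpha)$ and the computation $\det(I-M(\alpha))=0$ to extract the explicit threshold $\alpha_s$. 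Your anticipated ``delicate part'' is handled in the paper precisely by solving the characteristic equation $\det(I-M(\alpha))=0$, which is one of the options you listed.
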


\begin{proof}
Let us bound $\|x_{k+1}-x^*\|$, $\|x_{k+1}-x_k\|$, $\|\sigma_{k+1}-\mathcal{J}\sigma_{k+1}\|$, and $\|y_{k+1}-\mathcal{J}y_{k+1}\|$ in the sequel, where $x^*$ is the optimal variable of problem (\ref{5}). Denote $\sigma(x^*)$ as $\sigma^*$ for brevity in this proof.

First, for $\|x_{k+1}-x^*\|$, invoking (\ref{8}) yields that
\begin{align}
&\|x_{k+1}-x^*\|      \nonumber\\
&= \|x_{k}-x^*-\alpha[\nabla_1 f(x_{k},\sigma_{k})+\nabla\phi(x_{k})y_{k}]\|          \nonumber\\
&\leq \|x_{k}-x^*-\alpha[\nabla_1 f(x_{k},{\bf 1}_N\otimes\bar{\sigma}_{k})             \nonumber\\
&\hspace{0.2cm}+\nabla\phi(x_{k}){\bf 1}_N\otimes \frac{1}{N}\sum_{i=1}^N\nabla_2 f_i(x_{i,k},{\bf 1}_N\otimes\bar{\sigma}_k)]+\alpha\nabla f(x^*)          \nonumber\\
&\hspace{0.2cm}+\alpha\|\nabla_1 f(x_{k},\sigma_{k})+\nabla\phi(x_{k}){\bf 1}_N\otimes \bar{y}_{k}-\nabla_1 f(x_{k},{\bf 1}_N\otimes\bar{\sigma}_{k})   \nonumber\\
&\hspace{0.2cm}-\nabla\phi(x_{k}){\bf 1}_N\otimes \frac{1}{N}\sum_{i=1}^N\nabla_2 f_i(x_{i,k},{\bf 1}_N\otimes\bar{\sigma}_k)\|          \nonumber\\
&\hspace{0.2cm}+\alpha\|\nabla\phi(x_{k})y_{k}-\nabla\phi(x_{k}){\bf 1}_N\otimes \bar{y}_{k}\|           \nonumber\\
&\leq (1-\mu\alpha)\|x_k-x^*\|+\alpha L_1\|\sigma_k-{\bf 1}_N\otimes \bar{\sigma}_k\|               \nonumber\\
&\hspace{0.2cm}+\alpha\|\nabla\phi(x_k)\|\|y_k-{\bf 1}_N\otimes \bar{y}_k\|                   \nonumber\\
&\leq (1-\mu\alpha)\|x_k-x^*\|+\alpha L_1\|\sigma_k-\mathcal{J}\sigma_k\|                   \nonumber\\
&\hspace{0.2cm}+\alpha L_3\|y_k-\mathcal{J}y_k\|,                                          \label{10a}
\end{align}
where Assumption \ref{a2}.1, Lemma \ref{l3}, (\ref{8}) and (\ref{t1a}) have been utilized to obtain the second inequality, and the last inequality has applied the fact that $\|\nabla\phi(x_k)\|\leq \max_{i\in[N]}\|\phi_i(x_{i,k})\|\leq L_3$ by Assumption \ref{a2}.3, ${\bf 1}_N\otimes \bar{\sigma}_k=\mathcal{J}\sigma_k$, and ${\bf 1}_N\otimes \bar{y}_k=\mathcal{J}y_k$.

Second, for $\|x_{k+1}-x_k\|$, by noting that
\begin{align*}
\nabla f(x^*)&=\nabla_1 f(x^*,{\bf 1}_N\otimes\sigma^*)           \nonumber\\
&\hspace{0.4cm}+\nabla\phi(x^*)[{\bf 1}_N\otimes \frac{1}{N}\sum_{i=1}^N\nabla_2 f_i(x^*,{\bf 1}_N\otimes\sigma^*)]          \nonumber\\
&=\textbf{0},
\end{align*}
invoking (\ref{8}) yields that
\begin{align}
&\|x_{k+1}-x_k\|          \nonumber\\
&=\alpha\|\nabla_1f(x_k,\sigma_k)+\nabla\phi(x_k)y_k\|        \nonumber\\
&\leq \alpha\|\nabla_1f(x_k,\sigma_k)+\nabla\phi(x_k)\mathcal{J}y_k-\nabla_1 f(x^*,{\bf 1}_N\otimes\sigma^*)    \nonumber\\
&\hspace{0.3cm}-\nabla\phi(x^*)[{\bf 1}_N\otimes \frac{1}{N}\sum_{i=1}^N\nabla_2 f_i(x^*,{\bf 1}_N\otimes\sigma^*)]\|           \nonumber\\
&\hspace{0.3cm}+\alpha\|\nabla\phi(x_k)(y_k-\mathcal{J}y_k)\|           \nonumber\\
&\leq \alpha L_1(\|x_k-x^*\|+\|\sigma_k-{\bf 1}_N\otimes\sigma^*\|)+\alpha L_3\|y_k-\mathcal{J}y_k\|       \nonumber\\
&\leq \alpha L_1(\|x_k-x^*\|+\|\sigma_k-\mathcal{J}\sigma_k\|)+\alpha L_3\|y_k-\mathcal{J}y_k\|    \nonumber\\
&\hspace{0.3cm}+\alpha L_1\|\mathcal{J}\sigma_k-{\bf 1}_N\otimes\sigma^*\|,                      \label{11}
\end{align}
where Assumption \ref{a2}.1 and $\|\nabla\phi(x_k)\|\leq L_3$ have been used in the second inequality. For the last term in (\ref{11}), in view of Lemma \ref{l5}, one has that
\begin{align*}
\|\mathcal{J}\sigma_k-{\bf 1}_N\otimes\sigma^*\|^2&=\|{\bf 1}_N\otimes (\bar{\sigma}_k-\sigma^*)\|^2                   \nonumber\\
&=N\|\frac{1}{N}\sum_{i=1}^N(\phi_i(x_{i,k})-\phi_i(x_i^*))\|^2       \nonumber\\
&\leq \frac{1}{N}(\sum_{i=1}^N\|\phi_i(x_{i,k})-\phi_i(x_i^*)\|)^2     \nonumber\\
&\leq \frac{1}{N}(\sum_{i=1}^N L_3\|x_{i,k}-x_i^*\|)^2                 \nonumber\\
&\leq L_3^2\sum_{i=1}^N\|x_{i,k}-x_i^*\|^2                             \nonumber\\
&=L_3^2\|x_k-x^*\|^2,
\end{align*}
where Assumption \ref{a2}.3 has been employed in the second inequality, and the last inequality has appealed to the fact that $(\sum_{i=1}^N a_i)^2\leq N\sum_{i=1}^N a_i^2$ for any nonnegative scalars $a_i$'s. Therefore, combining the above inequality and (\ref{11}) follows that
\begin{align}
\|x_{k+1}-x_k\|&\leq\alpha L_1(1+L_3)\|x_k-x^*\|+\alpha L_1\|\sigma_k-\mathcal{J}\sigma_k\|           \nonumber\\
&\hspace{0.4cm}+\alpha L_3\|y_k-\mathcal{J}y_k\|.                      \label{12}
\end{align}

Third, regarding $\|\sigma_{k+1}-\mathcal{J}\sigma_{k+1}\|$, by noting that $\mathcal{J}\mathcal{A}=\mathcal{A}\mathcal{J}=\mathcal{J}$, in light of (\ref{9}), one can obtain that
\begin{align}
&\|\sigma_{k+1}-\mathcal{J}\sigma_{k+1}\|   \nonumber\\
&=\|\mathcal{A}\sigma_k+\phi(x_{k+1})-\phi(x_k)-\mathcal{J}\mathcal{A}\sigma_k       \nonumber\\
&\hspace{0.3cm}-\mathcal{J}[\phi(x_{k+1})-\phi(x_k)]\|          \nonumber\\
&\leq \rho\|\sigma_k-\mathcal{J}\sigma_k\|+\|I-\mathcal{J}\|\|\phi(x_{k+1})-\phi(x_k)\|    \nonumber\\
&\leq \rho\|\sigma_k-\mathcal{J}\sigma_k\|+L_3\|I-\mathcal{J}\|\|x_{k+1}-x_k\|,             \label{13}
\end{align}
where Lemma \ref{l4} has been leveraged in the first inequality, and Assumption \ref{a2}.3 has been exploited in the last inequality. By noticing that $\|I-\mathcal{J}\|=1$ and inserting (\ref{12}) into (\ref{13}), it can be obtained that
\begin{align}
&\|\sigma_{k+1}-\mathcal{J}\sigma_{k+1}\|   \nonumber\\
&\leq (\rho+\alpha L_1L_3)\|\sigma_k-\mathcal{J}\sigma_k\|+\alpha L_1 L_3(1+L_3)\|x_k-x^*\|       \nonumber\\
&\hspace{0.4cm}+\alpha L_3^2\|y_k-\mathcal{J}y_k\|.             \label{14}
\end{align}

Fourth, for $\|y_{k+1}-\mathcal{J}y_{k+1}\|$, similar to (\ref{13}), invoking (\ref{10}) results in that
\begin{align}
&\|y_{k+1}-\mathcal{J}y_{k+1}\|              \nonumber\\
&\leq \rho\|y_k-\mathcal{J}y_k\|+\|\nabla_2 f(x_{k+1},\sigma_{k+1})-\nabla_2 f(x_k,\sigma_k)\|.        \label{15}
\end{align}

At this step, by (\ref{9}), one has that $\|\sigma_{k+1}-\sigma_k\|=\|(\mathcal{A}-I\otimes I_d)(\sigma_k-\mathcal{J}\sigma_k)+\phi(x_{k+1})-\phi(x_k)\|\leq \|A-I\|\|\sigma_k-\mathcal{J}\sigma_k\|+L_3\|x_{k+1}-x_k\|$, where the fact $\mathcal{A}\sigma_k-\sigma_k=(\mathcal{A}-I\otimes I_d)(\sigma_k-\mathcal{J}\sigma_k)$ has been used in the equality, and Assumption \ref{a2}.3 has been leveraged in the inequality, which together with Assumption \ref{a2}.2 yields that
\begin{align}
&\hspace{-0.2cm}\|\nabla_2 f(x_{k+1},\sigma_{k+1})-\nabla_2 f(x_k,\sigma_k)\|         \nonumber\\
&\hspace{-0.2cm}\leq L_3(\|x_{k+1}-x_k\|+\|\sigma_{k+1}-\sigma_k\|)                \nonumber\\
&\hspace{-0.2cm}\leq L_2(1+L_3)\|x_{k+1}-x_k\|+L_2\|A-I\|\|\sigma_k-\mathcal{J}\sigma_k\|.         \label{16}
\end{align}

Substituting (\ref{12}) and (\ref{16}) into (\ref{15}) can give rise to
\begin{align}
&\|y_{k+1}-\mathcal{J}y_{k+1}\|       \nonumber\\
&\leq (\rho+\alpha L_2L_3(1+L_3))\|y_k-\mathcal{J}y_k\|        \nonumber\\
&\hspace{0.4cm}+\alpha L_1 L_2(1+L_3)^2\|x_k-x^*\|          \nonumber\\
&\hspace{0.4cm}+(\alpha L_1L_2(1+L_3)+L_2\|A-I\|)\|\sigma_k-\mathcal{J}\sigma_k\|.        \label{17}
\end{align}

Finally, define $\theta_k:=col(\|x_k-x^*\|,\|\sigma_k-\mathcal{J}\sigma_k\|,\|y_k-\mathcal{J}y_k\|)$. By (\ref{10a}), (\ref{14}), (\ref{17}) and $\|A-I\|\leq 2$, it can be concluded that
\begin{align}
\theta_{k+1}\leq M(\alpha) \theta_k,        \label{18}
\end{align}
where
\begin{align}
M(\alpha):=X+\alpha E,         \label{mat1}
\end{align}
and
\begin{align}
X:=\left(
     \begin{array}{ccc}
       1 & 0 & 0 \\
       0 & \rho & 0 \\
       0 & 2L_2 & \rho \\
     \end{array}
   \right),                  \label{mat2}
\end{align}
\begin{align}
E:=\left(
     \begin{array}{ccc}
       -\mu & L_1 & L_3 \\
       L_1L_2(1+L_3) & L_1L_3 & L_3^2 \\
       L_1L_2(1+L_3)^2 & L_1L_2(1+L_3) & L_2L_3(1+L_3) \\
     \end{array}
   \right).                \label{mat3}
\end{align}

Denote by $\lambda(\alpha)$ the eigenvalues of $M(\alpha)$. It is easy to see that $1$ is a simple eigenvalue of $M(0)$, and its corresponding left and right eigenvectors are both $w=col(1,0,0)$. As a result, invoking Lemma \ref{l5} leads to that
\begin{align}
\frac{d\lambda(\alpha)}{d\alpha}\Big|_{\alpha=0}=\frac{w^\top E w}{w^\top w}=-\mu<0,      \label{mat4}
\end{align}
which indicates that the spectral radius of $M(\alpha)$ will be less than $1$ for sufficiently small positive $\alpha$.

One can also see that the graph corresponding to $M(\alpha)$ is strongly connected, which together with Theorem C.3 in \cite{ren2008distributed} implies that $M(\alpha)$ is irreducible. By Lemma \ref{l1}, $M(\alpha)$ is primitive, which in combination with Lemma \ref{l2} can ensure that $1$ will be a simple eigenvalue of $M(\alpha)$ when $\alpha$ increases from $0$ to some value. By calculating $det(I-M(\alpha))=0$, one can obtain that $\alpha=\alpha_s$, where $\alpha_s$ is defined in (\ref{t1b}). Therefore, all eigenvalues of $M(\alpha)$ have absolute values less than $1$ when $\alpha\in(0,\alpha_s)$, which can guarantee the linear convergence rate of $\theta_k$, thus ending the proof.
\end{proof}

\begin{remark}
It is worth mentioning that, to our best knowledge, this paper is the first to investigate problem (\ref{5}) in the presence of the aggregative term $\sigma(x)$, for which a linearly convergent distributed algorithm has been developed here.
\end{remark}

\section{A Numerical Example}\label{s4}

This section aims at presenting an optimal placement problem for supporting the designed algorithm. In an optimal placement problem in $\mathbb{R}^2$, there are $M$ entities that are located at fixed positions, and meanwhile, there are $N$ free entities, each of which are only privately aware of some of the fixed $M$ entities. The objective is to determine the optimal positions of $N$ free entities in order to minimize the sum of all (square) distances from each free entity to its corresponding fixed entities and the (square) distances from each agent to the center of all free entities. For example, the entities can represent warehouses, the links between each free entity and its associated fixed entities as well as the center of all free entities stand for the transportation routes, and the center of all free entities means a goods factory or a central warehouse. In this example, free entities are called agents.

\begin{figure}[H]
\centering
\includegraphics[width=1.4in]{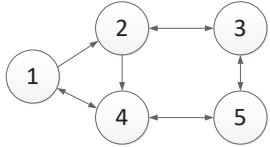}
\caption{The communication graph.}
\label{f1}
\end{figure}

For the above problem, let $M=N=5$, and each agent $i$ is only privately aware of the fixed entity $i$. In this case, the problem can be modeled as (\ref{5}) by letting
\begin{align}
f_i(x_i,\sigma(x))=\gamma_i\|x_i-r_i\|^2+\|x_i-\sigma(x)\|^2,~~i\in[N]       \label{19}
\end{align}
where $r_i$'s are the fixed entities, and $\gamma_i>0$ represents the weighting between the first and second terms. For the simulation, let $\phi_i$ be the identity mapping for all $i\in[N]$, $\alpha=0.05$, $\gamma_i=i$, $r_1=col(3,5)$, $r_2=col(6,9)$, $r_3=col(9,8)$, $r_4=col(6,2)$, and $r_5=col(9,2)$, and the communication graph is shown in Fig. \ref{f1}, which is strongly connected.

By randomly selecting the initial positions of agents, i.e., $x_{i,0}$'s, performing the developed DGT algorithm gives rise to evolutions of all $x_{i,k}$'s and $\sigma_{i,k}$'s, as shown in Figs. \ref{f2} and \ref{f3}, respectively, showing that all agents can converge to their optimal positions very fast and the estimate $\sigma_{i,k}$ of each agent can converge to the optimal $\sigma(x^*)$, where $x^*=col(x_1^*,\ldots,x_N^*)$ is the optimal position. Therefore, the simulation results support the theoretical result.

\begin{figure}[H]
\centering
\includegraphics[width=3.2in]{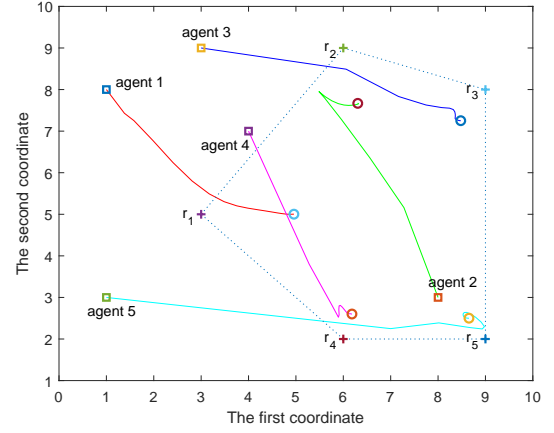}
\caption{Evolutions of $x_{i,k}$'s, where squares and circles mean initial positions and final optimal positions of all agents, respectively.}
\label{f2}
\end{figure}

\begin{figure}[H]
\centering
\includegraphics[width=3.2in]{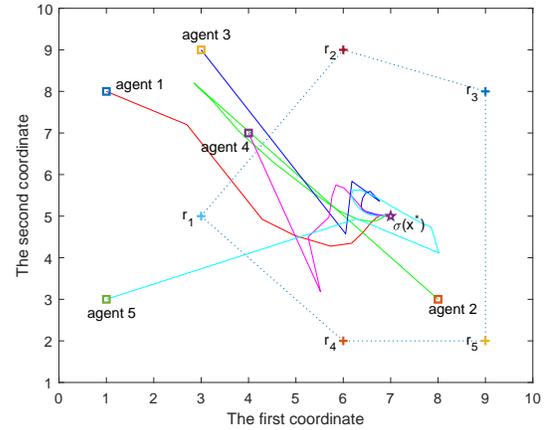}
\caption{Evolutions of $\sigma_{i,k}$'s, where squares are the initial positions, and the pentagram means the optimal center, i.e., $\sigma(x^*)=\frac{1}{N}\sum_{i=1}^N x_i^*$.}
\label{f3}
\end{figure}

\section{Conclusion}\label{s5}

This paper has proposed and investigated a new framework for distributed optimization, i.e., distributed aggregative optimization, which allows local objective functions to be dependent not only on their own decision variables but also on an aggregative term $\sigma(x)$, relying on decision variables of all other agents. To handle this problem, a distributed algorithm, i.e., DGT, has been developed and rigorously analyzed, where the global objective function is assumed to be strongly convex and smooth along with some Lipschitz property, and the communication graph is assumed to be fixed, balanced, and strongly connected. It has been shown that the algorithm can converge to the optimal variable at a linear rate. A numerical example has been provided to support the theoretical result. Basically, this paper opens up a new avenue to distributed optimization. Future works can be placed on various cases, such as unbalanced graphs, feasible constraint sets, and other interesting forms of objective functions, etc.

%
%
%



\end{document}